\documentclass[runningheads]{llncs}

\usepackage[T1]{fontenc}

\usepackage{graphicx}

\usepackage{hyperref}
\usepackage{color}

\usepackage{amsmath,amssymb,amsfonts}
\usepackage[noend]{algorithmic}
\usepackage{graphicx}
\usepackage{textcomp}
\usepackage{xcolor}

\DeclareMathOperator*{\argmin}{argmin}
\DeclareMathOperator*{\argmax}{argmax}

\usepackage{float}
\newcommand{\Matlab}{\textsc{Matlab}}
\newcommand{\mlin}[1]{\mbox{\tt{#1}}}

\newcommand{\Stiefel}{\mathrm{St}}
\DeclareMathOperator{\TTrank}{rank_\mathrm{TT}}
\DeclareMathOperator{\rank}{rank}
\usepackage{setspace}

\usepackage{cleveref}
\usepackage{algorithm}

\begin{document}
\title{An Approximate Projection onto the Tangent Cone to the Variety of Third-Order Tensors of Bounded Tensor-Train Rank}
\titlerunning{Approximate projection tangent cone third-order TTDs}
%
\author{Charlotte Vermeylen\inst{1}\orcidID{0000-0002-3615-6571}  \and
Guillaume Olikier\inst{2}\orcidID{0000-0002-2767-0480} \and
Marc Van Barel\inst{1}\orcidID{0000-0002-7372-382X}}
\authorrunning{C. Vermeylen et al.}
%
\institute{Department of Computer Science, KU Leuven, Heverlee, Belgium \email{charlotte.vermeylen@kuleuven.be} \and
ICTEAM Institute, UCLouvain, Louvain-la-Neuve, Belgium}
\maketitle              
\begin{abstract}
An approximate projection onto the tangent cone to the variety of third-order tensors of bounded tensor-train rank is proposed and proven to satisfy a better angle condition than the one proposed by Kutschan (2019). Such an approximate projection enables, e.g., to compute gradient-related directions in the tangent cone, as required by algorithms aiming at minimizing a continuously differentiable function on the variety, a problem appearing notably in tensor completion. A numerical experiment is presented which indicates that, in practice, the angle condition satisfied by the proposed approximate projection is better than both the one satisfied by the approximate projection introduced by Kutschan and the proven theoretical bound.

\keywords{Projection \and Tangent cone \and Angle condition \and Tensor-train decomposition.}
\end{abstract}
\section{Introduction}
Tangent cones play an important role in constrained optimization to describe admissible search directions and to formulate optimality conditions \cite[Chap. 6]{RockafellarWets}. In this paper, we focus on the set
\begin{equation}
\label{eq:LowTTrankVariety}
\mathbb{R}_{\le (k_1, k_2)}^{n_1 \times n_2 \times n_3} := \{X \in \mathbb{R}^{n_1 \times n_2 \times n_3} \mid \TTrank(X) \le (k_1, k_2)\},
\end{equation}
where $\TTrank(X)$ denotes the tensor-train rank of $X$ (see \Cref{subsec:TTDs}), which is a real algebraic variety \cite{kutschan2018}, and, given $X \in \mathbb{R}_{\le (k_1, k_2)}^{n_1 \times n_2 \times n_3}$, we propose an \emph{approximate projection} onto the tangent cone $T_X \mathbb{R}_{\le (k_1,k_2)}^{n_1 \times n_2 \times n_3}$, i.e., a set-valued mapping $\tilde{\mathcal{P}}_{T_X \mathbb{R}_{\le (k_1, k_2)}^{n_1 \times n_2 \times n_3}} : \mathbb{R}^{n_1 \times n_2 \times n_3} \multimap T_X \mathbb{R}_{\le (k_1, k_2)}^{n_1 \times n_2 \times n_3}$ such that there exists $\omega \in (0, 1]$ such that, for all $Y \in \mathbb{R}^{n_1 \times n_2 \times n_3}$ and all $\tilde{Y} \in \tilde{\mathcal{P}}_{T_X \mathbb{R}_{\le (k_1, k_2)}^{n_1 \times n_2 \times n_3}} Y$,
\begin{equation}
\label{eq:ApproximateProjectionAngleCondition}
\langle Y, \tilde{Y} \rangle \ge \omega \|\mathcal{P}_{T_X \mathbb{R}_{\le (k_1, k_2)}^{n_1 \times n_2 \times n_3}}Y\| \|\tilde{Y}\|,
\end{equation}
where $\langle \cdot, \cdot\rangle$ is the inner product on $\mathbb{R}^{n_1 \times n_2 \times n_3}$ given in \cite[Example~4.149]{Hackbusch}, \mbox{$\|\cdot\|$} is the induced norm, and the set
\begin{equation}
\label{eq:min_proj}
\mathcal{P}_{T_X \mathbb{R}_{\le (k_1, k_2)}^{n_1 \times n_2 \times n_3}} Y := \argmin_{Z \in T_X \mathbb{R}_{\le (k_1, k_2)}^{n_1 \times n_2 \times n_3}} \|Z-Y\|^2
\end{equation}
is the projection of $Y$ onto $T_X \mathbb{R}_{\le (k_1, k_2)}^{n_1 \times n_2 \times n_3}$.
By \cite[Definition~2.5]{schneider2015convergence}, inequality~\eqref{eq:ApproximateProjectionAngleCondition} is called an \emph{angle condition}; it is well defined since, as $T_X \mathbb{R}_{\le (k_1, k_2)}^{n_1 \times n_2 \times n_3}$ is a closed cone, all elements of $\mathcal{P}_{T_X \mathbb{R}_{\le (k_1, k_2)}^{n_1 \times n_2 \times n_3}}Y$ have the same norm (see \Cref{sec:ProposedApproximateProjection}).
Such an approximate projection enables, e.g., to compute a gradient-related direction in $T_X \mathbb{R}_{\le (k_1, k_2)}^{n_1 \times n_2 \times n_3}$, as required in the second step of \cite[Algorithm~1]{schneider2015convergence} if the latter is used to minimize a continuously differentiable function $f : \mathbb{R}^{n_1 \times n_2 \times n_3} \to \mathbb{R}$ on $\mathbb{R}_{\le (k_1, k_2)}^{n_1 \times n_2 \times n_3}$, a problem appearing notably in tensor completion; see \cite{Steinl_high_dim_TT_compl_2016} and the references therein. 

An approximate projection onto $T_X \mathbb{R}_{\le (k_1, k_2)}^{n_1 \times n_2 \times n_3}$ satisfying the angle condition \eqref{eq:ApproximateProjectionAngleCondition} with $\omega = \frac{1}{6\sqrt{n_1 n_2 n_3}}$ was proposed in \cite[\S 5.4]{kutschan2019}. If $X$ is a singular point of the variety, i.e., $(r_1, r_2) := \TTrank(X) \ne (k_1, k_2)$, the approximate projection proposed in this paper ensures (see \Cref{thm:approx_proj_TC_TT})
\begin{equation}
\label{eq:OurOmega}
\omega = \sqrt{\max\left\{\frac{k_1-r_1}{n_1-r_1}, \frac{k_2-r_2}{n_3-r_2}\right\}},
\end{equation}
which is better, and can be computed via SVDs (see \Cref{alg:HOOI_proj}). We point out that no general formula to project onto the closed cone $T_X \mathbb{R}_{\le (k_1,k_2)}^{n_1 \times n_2 \times n_3}$, which is neither linear nor convex (see \Cref{subsec:TangentConeLowRankVariety}), is known in the literature.

This paper is organized as follows. Preliminaries are introduced in \Cref{sec:Preliminaries}. Then, in \Cref{sec:ProposedApproximateProjection}, we introduce the proposed approximate projection and prove that it satisfies \eqref{eq:ApproximateProjectionAngleCondition} with $\omega$ as in \eqref{eq:OurOmega} (\Cref{thm:approx_proj_TC_TT}). Finally, in \Cref{sec:NumericalExperiment}, we present a numerical experiment where the proposed approximate projection preserves the direction better than the one from \cite[\S 5.4]{kutschan2019}.

\section{Preliminaries}
\label{sec:Preliminaries}
In this section, we introduce the preliminaries needed for \Cref{sec:ProposedApproximateProjection}. In \Cref{subsec:OrthogonalProjections}, we recall basic facts about orthogonal projections. Then, in \Cref{subsec:TTDs}, we review the tensor-train decomposition. Finally, in \Cref{subsec:TangentConeLowRankVariety}, we review the description of the tangent to $\mathbb{R}_{\le (k_1, k_2)}^{n_1 \times n_2 \times n_3}$ given in \cite[Theorem~2.6]{kutschan2018}.

\subsection{Orthogonal Projections}
\label{subsec:OrthogonalProjections}
Given $n, p \in \mathbb{N}$ with $n \ge p$, we let $\Stiefel(p, n) := \{U \in \mathbb{R}^{n \times p} \mid U^\top U = I_p\}$ denote the Stiefel manifold. For every $U \in \Stiefel(p, n)$, we let $P_U := UU^\top$ and $P_U^\perp := I_n-P_U$ denote the orthogonal projections onto the range of $U$ and its orthogonal complement, respectively.
The proof of \Cref{thm:approx_proj_TC_TT} relies on the following basic result.

\begin{lemma}
Let $A \in \mathbb{R}^{n \times m}$ have rank $r$. If $\hat{A} = \hat{U} \hat{S} \hat{V}^\top$ is a truncated SVD of rank $s$ of $A$, with $s<r$, then, for all $U \in \Stiefel(s, n)$ and all $V \in \Stiefel(s, m)$,
\begin{align}
\label{eq:ineqs_svd_trunc1}
\|P_{\hat{U}} A\| \ge \|P_U A\|,&&
\|P_{\hat{U}} A\|^2 \ge \frac{s}{r} \|A\|^2,\\ \label{eq:ineqs_svd_trunc2}
\|A P_{\hat{V}}\| \ge \|A P_V\|,&&
\|A P_{\hat{V}}\|^2 \ge \frac{s}{r} \|A\|^2.
\end{align}
\end{lemma}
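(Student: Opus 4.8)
The plan is to rephrase each of the four inequalities as an elementary statement about the singular values $\sigma_1 \ge \dots \ge \sigma_r > 0$ of $A$; here $\|\cdot\|$ denotes the Frobenius norm, induced by $\langle A, B \rangle = \mathrm{tr}(A^\top B)$, and it is orthogonally invariant. First I would extend $\hat U$ and $\hat V$ to a thin SVD $A = \bar U \Sigma \bar V^\top$, so that $\hat U$ and $\hat V$ are the first $s$ columns of $\bar U$ and of $\bar V$, respectively. The key preliminary observation is that $P_{\hat U} A$ is exactly the truncated SVD $\hat A$: since $\hat U^\top \bar U = [\,I_s \;\; 0\,]$, one gets $P_{\hat U} A = \hat U \hat U^\top \bar U \Sigma \bar V^\top = \hat U \hat S \hat V^\top = \hat A$, and therefore $\|P_{\hat U} A\|^2 = \sum_{i=1}^s \sigma_i^2$; the same computation applied to $A^\top$ yields $\|A P_{\hat V}\|^2 = \sum_{i=1}^s \sigma_i^2$ as well.

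For the left inequality of \eqref{eq:ineqs_svd_trunc1}, I would observe that, for any $U \in \Stiefel(s,n)$,
\[
\|P_U A\|^2 = \|U^\top A\|^2 = \mathrm{tr}\!\big(U^\top (A A^\top)\, U\big),
\]
and invoke Ky Fan's maximum principle (the variational characterization of the sum of the $s$ largest eigenvalues of a symmetric matrix): over $U \in \Stiefel(s,n)$ this trace is maximal when the columns of $U$ span a dominant $s$-dimensional invariant subspace of the positive semidefinite matrix $A A^\top$, i.e.\ when $U = \hat U$, the maximal value being $\sum_{i=1}^s \lambda_i(A A^\top) = \sum_{i=1}^s \sigma_i^2 = \|P_{\hat U} A\|^2$. (Equivalently, one may combine the Pythagorean identity $\|P_U A\|^2 = \|A\|^2 - \|P_U^\perp A\|^2$ with the Eckart--Young--Mirsky theorem, since $P_U A$ has rank at most $s$.) For the right inequality of \eqref{eq:ineqs_svd_trunc1}, I would use the elementary fact that the average of the $s$ largest of the nonincreasing numbers $\sigma_1^2, \dots, \sigma_r^2$ is at least their overall average: since $\sigma_s^2 \le \frac{1}{s}\sum_{i=1}^s \sigma_i^2$ and $\sum_{i=s+1}^r \sigma_i^2 \le (r-s)\,\sigma_s^2$, we get $\sum_{i=1}^r \sigma_i^2 \le \frac{r}{s}\sum_{i=1}^s \sigma_i^2$, i.e.\ $\|P_{\hat U} A\|^2 \ge \frac{s}{r}\|A\|^2$.

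Finally, \eqref{eq:ineqs_svd_trunc2} follows from \eqref{eq:ineqs_svd_trunc1} by transposition: for every $V \in \Stiefel(s,m)$ one has $\|A P_V\| = \|(A P_V)^\top\| = \|P_V A^\top\|$, and $A^\top$ has rank $r$ with rank-$s$ truncated SVD $\hat V \hat S \hat U^\top$, so applying \eqref{eq:ineqs_svd_trunc1} to $A^\top$ delivers both inequalities of \eqref{eq:ineqs_svd_trunc2} at once. I do not expect a genuine obstacle: the content is classical once the problem is phrased in terms of singular values. The only points deserving care are the identity $P_{\hat U} A = \hat A$ — so that $\|P_{\hat U} A\|^2$ really is the sum of the $s$ largest squared singular values — and the non-uniqueness of the truncated SVD when $\sigma_s = \sigma_{s+1}$, which is harmless because only one such $\hat A$ is fixed and all choices have the same norm.
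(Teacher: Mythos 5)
Your proof is correct, and its skeleton matches the paper's: both hinge on the identity $P_{\hat{U}} A = \hat{A}$, reduce everything to statements about the squared singular values, and obtain \eqref{eq:ineqs_svd_trunc2} from \eqref{eq:ineqs_svd_trunc1} by transposition (the paper merely says ``similarly''). The one place where you take a genuinely different route is the left inequality of \eqref{eq:ineqs_svd_trunc1}: you invoke Ky Fan's maximum principle for $\mathrm{tr}(U^\top A A^\top U)$ over $U \in \Stiefel(s,n)$, whereas the paper argues via the Eckart--Young theorem combined with the characterization \eqref{eq:proj_max_reform} of projections onto closed cones, showing that every $P_U A$ is a feasible point of the maximization problem solved by $\hat{A} = P_{\hat{U}} A$. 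Your parenthetical alternative (the Pythagorean identity plus Eckart--Young--Mirsky, using $\rank(P_U A) \le s$) is essentially the paper's argument in disguise. Ky Fan is the more direct, self-contained tool; the paper's formulation has the advantage of reusing machinery it needs anyway for \Cref{thm:approx_proj_TC_TT}. For the second inequality of \eqref{eq:ineqs_svd_trunc1}, your one-line averaging argument, $\sum_{i=s+1}^r \sigma_i^2 \le (r-s)\sigma_s^2 \le \frac{r-s}{s} \sum_{i=1}^s \sigma_i^2$, is a cleaner, case-free version of the paper's two-case analysis and is perfectly valid. Your closing remark about non-uniqueness when $\sigma_s = \sigma_{s+1}$ is also apt: the lemma is stated for a given truncated SVD, and all choices yield the same norm.
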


\begin{proof}
By the Eckart--Young theorem, $\hat{A}$ is a projection of $A$ onto $$\mathbb{R}_{\le s}^{n \times m} := \{X \in \mathbb{R}^{n \times m} \mid \rank(X) \le s\}.$$ Thus, since $\mathbb{R}_{\le s}^{n \times m}$ is a closed cone, the same conditions as in \eqref{eq:proj_max_reform} hold. Moreover, since $\hat{S} \hat{V}^\top = \hat{U}^\top A$ and thus $\hat{A} = \hat{U} \hat{U}^\top A = P_{\hat{U}} A$, it holds that
\begin{equation*}
\lVert P_{\hat{U}} A \rVert^2 = \max\left\{\lVert A_1 \rVert^2 \mid A_1 \in \mathbb{R}_{\le s}^{n \times m},\, \langle A_1, A\rangle = \lVert A_1 \rVert^2\right\}.
\end{equation*}
Furthermore, for all $U \in \Stiefel(s, n)$,
$\left\langle P_{{U}}A, A \right\rangle = \left\langle P_{{U}} A, P_{{U}} A + P_{{U}}^\perp A \right\rangle = \left\lVert P_{{U}} A \right\rVert^2.$
Hence,
\begin{equation*}
\{ P_{U} A \mid U \in \Stiefel(s, n) \} \subseteq \{A_1 \in \mathbb{R}_{\le s}^{n \times m} \mid \langle A_1, A \rangle = \| A_1 \|^2\}.
\end{equation*}
Thus,
$\lVert P_{\hat{U}} A \rVert^2 = \max_{U \in \Stiefel(s, n)} \lVert P_{U} A \rVert^2.$
The left inequality in \eqref{eq:ineqs_svd_trunc1} follows, and the one in \eqref{eq:ineqs_svd_trunc2} can be obtained similarly.

By orthogonal invariance of the Frobenius norm and by definition of $\hat{A}$,
\begin{align*}
\lVert A \rVert^2
= \sum_{i=1}^r \sigma_i^2,&&
\lVert \hat{A} \rVert^2
= \sum_{i=1}^s \sigma_i^2,
\end{align*}
where $\sigma_1, \dots, \sigma_r$ are the singular values of $A$ in decreasing order.
Moreover, either $\sigma_s^2 \ge \frac{1}{r} \sum_{i=1}^r \sigma_i^2$ or $\sigma_s^2 < \frac{1}{r} \sum_{i=1}^r \sigma_i^2$.
In the first case, we have
\begin{equation*}
\lVert \hat{A} \rVert^2
= \sum_{i=1}^s \sigma_i^2
\ge s \sigma_s^2
\ge s \frac{\sum_{i=1}^r \sigma_i^2}{r}
= \frac{s}{r} \lVert A \rVert^2.
\end{equation*}
In the second case, we have
\begin{equation*}
\lVert \hat{A} \rVert^2
= \sum_{i=1}^r \sigma_i^2 - \sum_{i=s+1}^r \sigma_i^2
\ge \lVert A \rVert^2 - (r-s) \sigma_s^2
> \lVert A \rVert^2 - (r-s) \frac{\sum_{i=1}^r \sigma_i^2}{r}
= \frac{s}{r} \lVert A \rVert^2.
\end{equation*}
Thus, in both cases, the second inequality in \eqref{eq:ineqs_svd_trunc1} holds. The second inequality in \eqref{eq:ineqs_svd_trunc2} can be obtained in a similar way.
\qed
\end{proof}

\subsection{The Tensor-Train Decomposition}
\label{subsec:TTDs}
In this section, we review basic facts about the tensor-train decomposition (TTD) that are used in \Cref{sec:ProposedApproximateProjection}; we refer to the original paper \cite{Oseledets2011} and the subsequent works \cite{LR_tensor_methods_Steinlechner_2014,Steinl_high_dim_TT_compl_2016,Steinlechner_thesis2016} for more details.

A \emph{tensor-train decomposition} of $X \in \mathbb{R}^{n_1 \times n_2 \times n_3}$ is a factorization
\begin{equation}
\label{eq:TTD}
X = X_1 \cdot X_2 \cdot X_3,
\end{equation}
where $X_1 \in \mathbb{R}^{n_1 \times r_1}$, $X_2 \in \mathbb{R}^{r_1 \times n_2 \times r_2}$, $X_3 \in \mathbb{R}^{r_2 \times n_3}$, and `$\cdot$' denotes the contraction between a matrix and a tensor. The minimal $(r_1, r_2)$ for which a TTD of $X$ exists is called the \emph{TT-rank} of $X$ and is denoted by $\TTrank(X)$. By \cite[Lemma~4]{Holtz_manifolds_2012}, the set
\begin{equation}
\label{eq:LowTTrankManifold}
\mathbb{R}_{(k_1, k_2)}^{n_1 \times n_2 \times n_3} := \{X \in \mathbb{R}^{n_1 \times n_2 \times n_3} \mid \TTrank(X) = (k_1, k_2)\}
\end{equation}
is a smooth embedded submanifold of $\mathbb{R}^{n_1 \times n_2 \times n_3}$.

Let $X^\mathrm{L} := [X]^{n_1 \times n_2 n_3} := \mathrm{reshape}(X, n_1 \times n_2 n_3)$ and $X^\mathrm{R} = [X]^{n_1 n_2 \times n_3} := \mathrm{reshape}(X, n_1 n_2 \times n_3)$ denote respectively the left and right unfoldings of $X$. Then, $\TTrank(X) = (\rank(X^\mathrm{L}), \rank(X^\mathrm{R}))$ and the minimal rank decomposition can be obtained by computing two successive SVDs of unfoldings; see \cite[Algorithm~1]{Oseledets2011}. The contraction interacts with the unfoldings according to the following rules:
\begin{align*} 
X_1 \cdot X_2 \cdot X_3 = \left[X_1 (X_2 \cdot X_3)^\mathrm{L} \right]^{n_1 \times n_2 \times n_3},&&
X_2 \cdot X_3 = \left[X_2^\mathrm{R} X_3\right]^{r_1 \times n_2 \times n_3}.
\end{align*}

For every $i \in \{1, 2, 3\}$, if $U_i \in \Stiefel(r_i, n_i)$, then the \mbox{mode-$i$} vectors of $[P_{U_1} X^\mathrm{L}$ $ (P_{U_3} \otimes P_{U_2})]^{n_1 \times n_2 \times n_3}$ are the orthogonal projections onto the range of $U_i$ of those of $X$. A similar property holds for $X^\mathrm{R}$.
The tensor $X$ is said to be \emph{left-orthogonal} if $n_1 \le n_2n_3$ and $(X^\mathrm{L})^\top \in \Stiefel(n_1, n_2n_3)$, and \emph{right-orthogonal} if $n_3 \le n_1n_2$ and $X^\mathrm{R} \in \Stiefel(n_3, n_1n_2)$.

As a TTD is not unique, certain orthogonality conditions can be enforced, which can improve the numerical stability of algorithms working with TTDs.
Those used in this work are given in \Cref{lemma:TangentConeLowRankVariety}.

\subsection{The Tangent Cone to the Low-Rank Variety}
\label{subsec:TangentConeLowRankVariety}
In \cite[Theorem~2.6]{kutschan2018}, a parametrization of the tangent cone to $\mathbb{R}_{\le (k_1, k_2)}^{n_1 \times n_2 \times n_3}$ is given and, because this parametrization is not unique, corresponding orthogonality conditions are added. The following lemma recalls this parametrization however with slightly different orthogonality conditions which make the proofs in the rest of the paper easier, and the numerical computations more stable because they enable to avoid matrix inversion in \Cref{alg:HOOI_proj}.

\begin{lemma}
\label{lemma:TangentConeLowRankVariety}
Let $X \in \mathbb{R}_{(r_1, r_2)}^{n_1 \times n_2 \times n_3}$ have $X = X_1 \cdot X_2'' \cdot X_3'' = X_1' \cdot X_2' \cdot X_3$ as TTDs, where $(X_2''^{\mathrm{L}})^\top \in \Stiefel(r_1, n_2 r_2)$, $X_3''^\top \in \Stiefel(r_2, n_3)$, $X_1' \in \Stiefel(r_1, n_1)$, and $X_2'^{\mathrm{R}} \in \Stiefel(r_2, r_1 n_2)$.
Then, $T_X \mathbb{R}_{\le (k_1, k_2)}^{n_1 \times n_2 \times n_3}$ is the set of all $G$ such that
\begin{equation}
\label{eq:paran_g_final}
G = \begin{bmatrix} X_1' && U_1 && W_1 \end{bmatrix} \cdot \begin{bmatrix}
X_2' && U_2 && W_2 \\ 
0 && Z_2 && V_2 \\
0 && 0 && X_2''
\end{bmatrix} \cdot \begin{bmatrix} W_3 \\ V_3 \\ X_3'' \end{bmatrix}
\end{equation}
with $U_1 \in \Stiefel(s_1, n_1)$, $W_1 \in \mathbb{R}^{n_1 \times r_1}$, $U_2 \in \mathbb{R}^{r_1 \times n_2 \times s_2}$, $W_2 \in \mathbb{R}^{r_1 \times n_2 \times r_2}$, $Z_2 \in \mathbb{R}^{s_1 \times n_2 \times s_2}$, $V_2 \in \mathbb{R}^{s_1 \times n_2 \times r_2}$, $W_3 \in \mathbb{R}^{r_2 \times n_3}$, $V_3^\top \in \Stiefel(s_2, n_3)$, $s_i = k_i-r_i$ for all $i \in \{1, 2\}$, and
\begin{equation}
\label{eq:orth_cond_K_modified}
\begin{aligned}
U_1^\top X_1' &= 0, & W_1^\top X_1' &= 0, & (U_2^\mathrm{R})^\top X_2'^{\mathrm{R}} &= 0,\\
W_3 X_3''^\top &= 0, & V_3 X_3''^\top &= 0, & V_2^\mathrm{L} (X_2''^{\mathrm{L}})^\top &= 0.
\end{aligned}
\end{equation}
\end{lemma}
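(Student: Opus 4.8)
The plan is to deduce the lemma from \cite[Theorem~2.6]{kutschan2018}, which gives \emph{the same} block parametrization \eqref{eq:paran_g_final} of $T_X\mathbb{R}_{\le(k_1,k_2)}^{n_1\times n_2\times n_3}$ but under a different, less convenient, set of orthogonality conditions. Since the formula \eqref{eq:paran_g_final} is common to both, it suffices to show that the set of $G$ produced by \eqref{eq:paran_g_final} under \eqref{eq:orth_cond_K_modified} coincides with the set produced by \eqref{eq:paran_g_final} under the conditions of \cite[Theorem~2.6]{kutschan2018}; the latter being $T_X\mathbb{R}_{\le(k_1,k_2)}^{n_1\times n_2\times n_3}$, this proves both inclusions of the lemma at once. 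First I would assemble the bookkeeping tools: any two TTDs of $X$ differ by a pair of gauge matrices $(A_1,A_2)\in\mathrm{GL}(r_1)\times\mathrm{GL}(r_2)$ acting on the internal indices (in particular the first, resp.\ last, factor of any TTD spans the column space of $X^{\mathrm{L}}$, resp.\ the row space of $X^{\mathrm{R}}$); the two TTDs $X=X_1\cdot X_2''\cdot X_3''$ and $X=X_1'\cdot X_2'\cdot X_3$ with the stated orthogonality are produced from any TTD by standard left- and right-orthogonalization; and the contraction/unfolding identities of \Cref{subsec:TTDs} describe how a $\mathrm{GL}$-factor acting on a mode of a core passes through the left and right unfoldings.

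The substantive part is to convert an arbitrary $G\in T_X\mathbb{R}_{\le(k_1,k_2)}^{n_1\times n_2\times n_3}$, given in the form of \cite[Theorem~2.6]{kutschan2018}, into the form \eqref{eq:paran_g_final} with \eqref{eq:orth_cond_K_modified} by a chain of value-preserving moves carried out in a fixed order: (1) left-orthogonalize the outer factor to $X_1'$, and using $W_1\cdot X_2''\cdot X_3''=X_1'(X_1'^\top W_1)\cdot X_2''\cdot X_3''+(P_{X_1'}^\perp W_1)\cdot X_2''\cdot X_3''$ replace $W_1$ by $P_{X_1'}^\perp W_1$ while moving the removed term into $W_2$, so that $W_1^\top X_1'=0$; similarly push the $P_{X_1'}$-part of $U_1$ into the manifold part, giving $U_1^\top X_1'=0$; (2) right-orthogonalize the other outer factor to $X_3''$, and re-absorb the $P_{X_3''^\top}$-components of $W_3$ and of $V_3$ into $W_2$ and $V_2$, giving $W_3X_3''^\top=0$ and $V_3X_3''^\top=0$; (3) use the residual gauge freedom to make $X_2'^{\mathrm{R}}$ and $X_2''^{\mathrm{L}}$ orthonormal, then re-absorb the component of $U_2$ lying along the columns of $X_2'^{\mathrm{R}}$ and the component $C\cdot X_2''$ of $V_2$ lying along the rows of $X_2''^{\mathrm{L}}$ — the latter into $W_1$ via $U_1\cdot(C\cdot X_2'')\cdot X_3''=(U_1C)\cdot X_2''\cdot X_3''$, which preserves $W_1^\top X_1'=0$ since $(U_1C)^\top X_1'=0$ — so that $(U_2^{\mathrm{R}})^\top X_2'^{\mathrm{R}}=0$ and $V_2^{\mathrm{L}}(X_2''^{\mathrm{L}})^\top=0$; (4) normalize $U_1$ and $V_3$ to Stiefel matrices, pushing the triangular factors into $Z_2$ (and, where needed, into $V_2$ and $U_2$, whose established constraints involve the opposite mode and are hence untouched). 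The reverse inclusion — that every $G$ given by \eqref{eq:paran_g_final} and \eqref{eq:orth_cond_K_modified} lies in $T_X\mathbb{R}_{\le(k_1,k_2)}^{n_1\times n_2\times n_3}$ — follows from the same correspondence read in the other direction, replacing the orthogonalizations by the inverse gauge transformations.

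The hard part will be the consistency checking built into the ordering of (1)--(4): each orthogonality relation, once enforced, must survive every later move. The most delicate points are that re-absorbing the ``in-space'' part of $V_2$ in step~(3) does not revive a nonzero $V_3X_3''^\top$, that enforcing $(U_2^{\mathrm{R}})^\top X_2'^{\mathrm{R}}=0$ leaves $U_1^\top X_1'=0$ intact, and that the final Stiefel normalizations of $U_1$ and $V_3$ preserve all six relations of \eqref{eq:orth_cond_K_modified}. Once the order above is fixed, each such check reduces to a single application of one of the contraction/unfolding identities recalled in \Cref{subsec:TTDs}, so the remaining computations are routine.
\qed
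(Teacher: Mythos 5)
Your proposal follows essentially the same route as the paper: both start from Kutschan's Theorem~2.6 and exploit the gauge freedom of the parametrization (the paper via explicit invertible factors $B$, $C$, $Q$, $R$ acting on the internal indices, you via a sequence of split-and-absorb moves) to trade Kutschan's orthogonality conditions for \eqref{eq:orth_cond_K_modified}, the key common step being the absorption of the component of $\dot{W}_3$ along $X_3''$ into $W_2$. The differences are cosmetic; several of your moves (e.g.\ re-orthogonalizing $U_1$ and $W_1$ against $X_1'$, $V_3$ against the row space of $X_3''$, and $U_2$ against $X_2'^{\mathrm{R}}$) are already guaranteed by Kutschan's conditions and survive the gauge change automatically.
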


\begin{proof}
{By \cite[Theorem 2.6]{kutschan2018}, $T_X\mathbb{R}_{\le (k_1, k_2)}^{n_1 \times n_2 \times n_3}$ is the set of all $G \in \mathbb{R}^{n_1 \times n_2 \times n_3}$ that can be decomposed as
\begin{equation*} 
\begin{split}
G &= \begin{bmatrix} X_1' && \dot{U}_1 && \dot{W}_1 \end{bmatrix} \cdot
\begin{bmatrix}
X_2' && \dot{U}_2 && \dot{W}_2 \\ 
0 && \dot{Z}_2 && \dot{V}_2 \\
0 && 0 && X_2' \\
\end{bmatrix} \cdot \begin{bmatrix} \dot{W}_3 \\ \dot{V}_3 \\ X_3 \end{bmatrix}, \\
\end{split}
\end{equation*}
with the orthogonality conditions
\begin{align}
\label{eq:orth_cond_K}
\dot{U}_1^\top X_1' &= 0, & \left( \dot{V}_2 \cdot X_3 \right)^{\mathrm{L}} \left(  \left( X_{2}' \cdot X_3 \right)^\mathrm{L} \right)^\top &= 0 , &\dot{V}_3 X_{3}^\top &= 0,\\
\dot{W}_1^\top  X_1'&= 0, & \left(\dot{W}_2^{\mathrm{R}} \right)^\top X_2'^{\mathrm{R}}&=0,& \left(\dot{U}_2^{\mathrm{R}} \right)^\top X_2'^{\mathrm{R}} &= 0 .  \nonumber
\end{align} 
The following invariances hold for all $B \in \mathbb{R}^{s_2 \times s_2}, C \in \mathbb{R}^{r_2 \times r_2}, Q \in \mathbb{R}^{s_1 \times s_1}$, and $R \in \mathbb{R}^{r_1 \times r_1}$:
\begin{equation*}
\begin{split}
G = &\begin{bmatrix} X_1' &~ \dot{U}_1 Q^{-1}  &~ \dot{W}_1 R^{-1} \end{bmatrix} \cdot \begin{bmatrix}
X_2'&~ \dot{U}_2 \cdot B &~ \dot{W}_2 \cdot C\\ 
0 &~ Q \cdot \dot{Z}_2\cdot B &~ Q \cdot \dot{V}_2 \cdot C \\
0 &~ 0 &~  R \cdot X_2'\cdot C \\
\end{bmatrix} \cdot \begin{bmatrix} \dot{W}_3 \\  B^{-1} \dot{V}_3 \\ C^{-1} X_3' \end{bmatrix}. \\
\end{split}
\end{equation*}
Then, if we define $U_1 := \dot{U}_1 Q^{-1}$, ${W}_1 := \dot{W}_1 R^{-1}$, ${U}_2 := \dot{U}_2 \cdot B$, $Z_2 := Q \cdot \dot{Z}_2\cdot B$, $V_2 := Q \cdot \dot{V}_2 \cdot C$, $X_2'' := R \cdot X_2'\cdot C$, $V_3 := B^{-1} \dot{V}_3$, and $X_3'' := C^{-1} X_3'$,
the matrices $B$, $C$, $Q$, and $R$ can be chosen such that $X_2''$ is left-orthogonal, $X_3''^\top \in \Stiefel(r_2, n_3)$, $V_3^\top \in \Stiefel(s_2, n_3)$, and $U_1 \in \Stiefel(s_1, n_1)$, e.g., using SVDs. 
Additionally, $\dot{W}_3$ can be decomposed as $\dot{W}_3 =  \dot{W}_3 X_3''^\top X_3'' +  {W}_3$. The two terms involving $\dot{W}_3$ and $\dot{W}_2 \cdot C$ can then be regrouped as
\begin{equation*} 
\begin{split}
&X_1' \cdot X_2' \cdot \dot{W}_3 + X_1' \cdot \dot{W}_2 \cdot CX_3''
= X_1' \cdot W_2 \cdot X_3''+  X_1' \cdot X_2' \cdot {W}_3,
\end{split}
\end{equation*}
where we have defined ${W}_2 := X_2' \cdot \dot{W}_3 X_3''^\top + \dot{W}_2 \cdot C$, obtaining the parametrization \eqref{eq:paran_g_final} satisfying \eqref{eq:orth_cond_K_modified}.} \qed
\end{proof}

Expanding \eqref{eq:paran_g_final} yields, by \eqref{eq:orth_cond_K_modified}, a sum of six mutually orthogonal TTDs:
\begin{equation}
\label{eq:g_expanded}
\begin{split}
G =&~ {W}_1 \cdot X_2''\cdot X_3'' + X_1' \cdot X_2' \cdot {W}_3 + X_1' \cdot {W}_2 \cdot X_3'' \\
&+ {U}_1 \cdot {V}_2 \cdot X_3'' + X_1' \cdot {U}_2 \cdot {V}_3 +  {U}_1 \cdot Z_2 \cdot {V}_3.
\end{split}
\end{equation}
Thus, the following holds:
\begin{align} \label{eq:param_g}
W_1 &= G^{\mathrm{L}} \left(\left(X_2'' \cdot X_3''\right)^{\mathrm{L}}\right)^\top, & W_2 &= X_1'^\top \cdot G \cdot X_3''^\top, & W_3 &= \left(\left(X_1'' \cdot X_2''\right)^{\mathrm{R}}\right)^\top G^{\mathrm{R}}, \nonumber \\
U_2 &= X_1'^\top \cdot G \cdot V_3^\top, & V_2 &= U_1^\top \cdot G \cdot X_3''^\top, & Z_2 &= U_1^\top \cdot G \cdot V_3^\top.
\end{align}
The first three terms in \eqref{eq:g_expanded} form the tangent space $T_X \mathbb{R}_{(r_1, r_2)}^{n_1 \times n_2 \times n_3}$, the projection onto which is described in \cite[Theorem~3.1 and Corollary~3.2]{Lubich_TT_time_int_2015}.

\section{The Proposed Approximate Projection}
\label{sec:ProposedApproximateProjection}
In this section, we prove \Cref{prop:U2_V2_Z2_orth_proj} and then use it to prove \Cref{thm:approx_proj_TC_TT}. Both results rely on the following observation. 
By \cite[Proposition~A.6]{LevinKileelBoumal2022}, since $T_X \mathbb{R}_{\le (k_1, k_2)}^{n_1 \times n_2 \times n_3}$ is a closed cone, for all $Y \in \mathbb{R}^{n_1 \times n_2 \times n_3}$ and $\hat{Y} \in \mathcal{P}_{T_X \mathbb{R}_{\le (k_1, k_2)}^{n_1 \times n_2 \times n_3}} Y$, it holds that $\langle Y-\hat{Y},\hat{Y}\rangle=0$ or, equivalently, $\langle Y,\hat{Y}\rangle = \lVert \hat{Y} \rVert^2$. Thus, all elements of $\mathcal{P}_{T_X \mathbb{R}_{\le (k_1, k_2)}^{n_1 \times n_2 \times n_3}} Y$ have the same norm and \eqref{eq:min_proj} can be rewritten as
\begin{equation}
\label{eq:proj_max_reform}
\mathcal{P}_{T_X \mathbb{R}_{\le (k_1, k_2)}^{n_1 \times n_2 \times n_3}} Y
= \argmax_{\substack{Z \in T_X \mathbb{R}_{\le (k_1, k_2)}^{n_1 \times n_2 \times n_3} \\ \langle Y, Z \rangle = \|Z\|^2}} \|Z\|
= \argmax_{\substack{Z \in {T_X \mathbb{R}_{\le (k_1, k_2)}^{n_1 \times n_2 \times n_3}} \\ \left\langle Y, Z \right\rangle = \|Z\|^2}} \left\langle Y, \frac{Z}{\|Z\|} \right\rangle.
\end{equation}

\begin{proposition}
\label{prop:U2_V2_Z2_orth_proj}
Let $X$ be as in \Cref{lemma:TangentConeLowRankVariety}. For every $Y \in \mathbb{R}^{n_1 \times n_2 \times n_3}$ and every $\hat{Y} \in \mathcal{P}_{T_X \mathbb{R}_{\le (k_1, k_2)}^{n_1 \times n_2 \times n_3}} Y$, if $U_1$ and $V_3$ are the parameters of $\hat{Y}$ in \eqref{eq:g_expanded}, then the parameters $W_1$, $W_2$, $W_3$, $U_2$, $V_2$, and $Z_2$ of $\hat{Y}$ can be written as
\begin{align}
\label{eq:V2_U2_Z2_proj_orth}
W_1 &= P_{X_1'}^\perp \left( Y \cdot X_3''^\top  \right)^\mathrm{L} \left( X_2''^{\mathrm{L}} \right)^\top, & W_3 &= X_2'^{\mathrm{R}} \left( X_1'^\top \cdot Y  \right)^{\mathrm{R}} P_{X_3''^\top}^\perp, \nonumber\\
U_2 &= \left[ P_{X_2'^{\mathrm{R}}}^\perp \left( X_1'^\top \cdot Y \right)^\mathrm{R} \right]^{r_1 \times n_2 \times n_3} \cdot V_3^\top, & W_2 &= X_1'^\top \cdot Y \cdot X_3''^\top, \\
V_2 &=  U_1^\top \cdot \left[ \left( Y \cdot X_3''^\top  \right)^\mathrm{L} P_{\left( X_2''^{\mathrm{L}} \right)^\top}^\perp\right]^{n_1 \times n_2 \times r_2}, & Z_2 &= U_1^\top \cdot Y \cdot V_3^\top \nonumber.
\end{align}
Furthermore, $Y_\parallel(U_1, V_3)$ defined as in \eqref{eq:g_expanded} with the parameters from~\eqref{eq:V2_U2_Z2_proj_orth} is a feasible point of \eqref{eq:proj_max_reform} for all $U_1$ and all $V_3$.
\end{proposition}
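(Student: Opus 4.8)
The plan is to fix $U_1$ and $V_3$ first and handle the remaining parameters by a linear least-squares argument. For any admissible pair $U_1 \in \Stiefel(s_1, n_1)$ with $U_1^\top X_1' = 0$ and $V_3^\top \in \Stiefel(s_2, n_3)$ with $V_3 X_3''^\top = 0$, the set of tensors of the form \eqref{eq:paran_g_final} with \emph{these} $U_1$ and $V_3$ is a linear subspace of $\mathbb{R}^{n_1 \times n_2 \times n_3}$: expanding \eqref{eq:paran_g_final} via \eqref{eq:g_expanded} and keeping the constraints of \eqref{eq:orth_cond_K_modified} that still bind the free parameters $W_1, W_3, W_2, V_2, U_2, Z_2$, this subspace equals $\mathcal{S}_1 + \dots + \mathcal{S}_6$, where $\mathcal{S}_i$ is the linear subspace swept out by the $i$-th term of \eqref{eq:g_expanded} when its parameter ranges over the corresponding linear constraint (all other parameters set to zero). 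Since the six terms of \eqref{eq:g_expanded} are mutually orthogonal — the remark following \eqref{eq:paran_g_final} — this sum is an orthogonal direct sum, so minimizing $\|Z - Y\|^2$ over it amounts to the orthogonal projection of $Y$ onto it, which is $\sum_{i=1}^6 \Pi_{\mathcal{S}_i} Y$, where $\Pi_{\mathcal{S}_i}$ denotes the orthogonal projection onto $\mathcal{S}_i$.

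The next step is to evaluate each $\Pi_{\mathcal{S}_i} Y$ explicitly. For every $i$, the linear map carrying the free parameter ($W_1$, $W_3$, $W_2$, $V_2$, $U_2$, or $Z_2$) to the $i$-th term of \eqref{eq:g_expanded} is an isometry onto its image; this follows from the orthogonality of the fixed factors ($X_1'^\top X_1' = I_{r_1}$, $X_3'' X_3''^\top = I_{r_2}$, $X_2''^{\mathrm{L}}(X_2''^{\mathrm{L}})^\top = I_{r_1}$, $(X_2'^{\mathrm{R}})^\top X_2'^{\mathrm{R}} = I_{r_2}$, $U_1^\top U_1 = I_{s_1}$, $V_3 V_3^\top = I_{s_2}$) combined with the contraction/unfolding rules from \Cref{subsec:TTDs}. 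Composing such an isometry with its adjoint gives the projection of $Y$ onto the \emph{unconstrained} version of the $i$-th block (for instance $W_2 = X_1'^\top \cdot Y \cdot X_3''^\top$ and $Z_2 = U_1^\top \cdot Y \cdot V_3^\top$, which carry no constraint); for the four constrained blocks, namely $W_1$, $W_3$, $V_2$, and $U_2$, one then composes inside the (isometric) parameter space with the orthogonal projection enforcing the relevant constraint of \eqref{eq:orth_cond_K_modified}, which is precisely left- or right-multiplication of the appropriate unfolding by one of $P_{X_1'}^\perp$, $P_{X_3''^\top}^\perp$, $P_{(X_2''^{\mathrm{L}})^\top}^\perp$, or $P_{X_2'^{\mathrm{R}}}^\perp$ (these projectors are well defined thanks to the orthogonality requirements built into \Cref{lemma:TangentConeLowRankVariety}). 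Carrying this out block by block reproduces exactly \eqref{eq:V2_U2_Z2_proj_orth}. The main obstacle I anticipate is purely bookkeeping: keeping the reshapings and contraction identities straight across all six of these adjoint-then-projection computations, each of which is routine but error-prone.

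Both claims then follow. For the first: since $\hat Y$ minimizes $\|Z - Y\|$ over all of $T_X \mathbb{R}_{\le (k_1, k_2)}^{n_1 \times n_2 \times n_3}$ and, with $U_1, V_3$ chosen as its own parameters, $\hat Y$ lies in the subspace $\mathcal{S}_1 \oplus \dots \oplus \mathcal{S}_6 \subseteq T_X \mathbb{R}_{\le (k_1, k_2)}^{n_1 \times n_2 \times n_3}$, it also minimizes $\|Z - Y\|$ over that subspace; hence $\hat Y = \sum_{i=1}^6 \Pi_{\mathcal{S}_i} Y$, and, comparing the orthogonal decomposition of $\hat Y$ into the six terms of \eqref{eq:g_expanded} with this sum, the $i$-th term of $\hat Y$ equals $\Pi_{\mathcal{S}_i} Y$ for each $i$. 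Since each parametrization is injective, the parameters $W_1, W_2, W_3, U_2, V_2, Z_2$ of $\hat Y$ are those in \eqref{eq:V2_U2_Z2_proj_orth}. For the second claim, for any admissible $U_1, V_3$ the tensor $Y_\parallel(U_1, V_3) = \sum_{i=1}^6 \Pi_{\mathcal{S}_i} Y$ has the form \eqref{eq:paran_g_final} and its parameters satisfy \eqref{eq:orth_cond_K_modified} — $U_1$ and $V_3$ by admissibility, and $W_1$, $W_3$, $V_2$, $U_2$ because of the $P^\perp$ factor produced in the previous step — so $Y_\parallel(U_1, V_3) \in T_X \mathbb{R}_{\le (k_1, k_2)}^{n_1 \times n_2 \times n_3}$. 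Finally, using $\langle Y, \Pi_{\mathcal{S}_i} Y\rangle = \|\Pi_{\mathcal{S}_i} Y\|^2$ for each $i$ and the mutual orthogonality of the $\mathcal{S}_i$, one gets $\langle Y, Y_\parallel(U_1, V_3)\rangle = \sum_{i=1}^6 \|\Pi_{\mathcal{S}_i} Y\|^2 = \|Y_\parallel(U_1, V_3)\|^2$, so $Y_\parallel(U_1, V_3)$ meets the constraint in \eqref{eq:proj_max_reform} and is therefore a feasible point of that problem.
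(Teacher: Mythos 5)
Your proof is correct and rests on the same central observation as the paper's: once $U_1$ and $V_3$ are frozen, the remaining parameters sweep out a linear subspace of $T_X \mathbb{R}_{\le (k_1, k_2)}^{n_1 \times n_2 \times n_3}$ (contained in the cone, and an orthogonal direct sum of six blocks), and the formulas \eqref{eq:V2_U2_Z2_proj_orth} are exactly the orthogonal projection of $Y$ onto that subspace; the ``isometry composed with constraint projection'' computation you defer as bookkeeping is the same ``straightforward computation'' the paper defers, and your feasibility argument via $\langle Y - Y_\parallel, Y_\parallel\rangle = 0$ is identical to theirs. Where you genuinely diverge is the identification step. The paper deduces $\|Y_\parallel(U_1,V_3)\| \le \|\hat{Y}\|$ from feasibility, handles $\hat{Y}=0$ separately, and then runs the Cauchy--Schwarz chain $\|\hat{Y}\|^2 = \langle Y, \hat{Y}\rangle = \langle Y_\parallel, \hat{Y}\rangle \le \|Y_\parallel\|\,\|\hat{Y}\| \le \|\hat{Y}\|^2$ to force equality and hence $Y_\parallel = \hat{Y}$. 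You instead observe that $\hat{Y}$, being a minimizer of $\|Z-Y\|$ over the whole cone and an element of the subspace, minimizes over the subspace and therefore coincides with its unique orthogonal projection $\sum_i \Pi_{\mathcal{S}_i} Y$; injectivity of the parametrization (equivalently, \eqref{eq:param_g}) then pins down the parameters. Your closing argument is slightly cleaner, since it avoids both the zero/nonzero case split and the equality analysis in Cauchy--Schwarz; the paper's version has the advantage of exercising the reformulation \eqref{eq:proj_max_reform}, which is the workhorse of the subsequent proof of \Cref{thm:approx_proj_TC_TT}. Either way, the conclusion and the level of rigor are the same.
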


\begin{proof}
Straightforward computations show that $\langle Y, \hat{Y} \rangle = \langle Y_\parallel(U_1, V_3), \hat{Y} \rangle$ and $\big\langle Y_\parallel(U_1, V_3), Y-Y_\parallel (U_1, V_3) \big\rangle = 0$. Thus, $Y_\parallel(U_1, V_3)$ is a feasible point of \eqref{eq:proj_max_reform}. Since $\hat{Y}$ is a solution to \eqref{eq:proj_max_reform}, $\|Y_\parallel(U_1, V_3)\| \le \|\hat{Y}\|$. Therefore, if $\hat{Y} = 0$, then $Y_\parallel(U_1, V_3) = 0$ and consequently all parameters in \eqref{eq:V2_U2_Z2_proj_orth} are zero because of \eqref{eq:param_g}. Otherwise, by using the Cauchy--Schwarz inequality, we have
\begin{equation}
\label{eq:ProofPropU1V3}
\|\hat{Y}\|^2
= \langle Y, \hat{Y} \rangle
= \langle Y_\parallel(U_1, V_3), \hat{Y} \rangle
\le \|Y_\parallel(U_1, V_3)\| \|\hat{Y}\|
\le \|\hat{Y}\|^2,
\end{equation}
where the last inequality holds because $\hat{Y}$ is a solution to \eqref{eq:proj_max_reform}. It follows that the Cauchy--Schwarz inequality is an equality and hence there exists $\lambda \in (0,\infty)$ such that $Y_\parallel(U_1, V_3) = \lambda \hat{Y}$. By \eqref{eq:ProofPropU1V3}, $\lambda = 1$. Thus, because of \eqref{eq:param_g}, the parameters in \eqref{eq:V2_U2_Z2_proj_orth} are those of $\hat{Y}$.
\qed
\end{proof}

\begin{theorem}
\label{thm:approx_proj_TC_TT}
Let $X$ be as in \Cref{lemma:TangentConeLowRankVariety} with $(r_1, r_2) \ne (k_1, k_2)$.
The approximate projection that computes the parameters $U_1$ and $V_3$ of $\tilde{Y} \in \tilde{\mathcal{P}}_{T_X \mathbb{R}_{\le (k_1, k_2)}^{n_1 \times n_2 \times n_3}} Y$ in \eqref{eq:g_expanded} with \Cref{alg:HOOI_proj} and the parameters $W_1$, $W_2$, $W_3$, $U_2$, $V_2$, and $Z_2$ with \eqref{eq:V2_U2_Z2_proj_orth} satisfies \eqref{eq:ApproximateProjectionAngleCondition} with $\omega$ as in \eqref{eq:OurOmega} for all $\varepsilon$ and all $i_{\mathrm{max}}$ in \Cref{alg:HOOI_proj}.
\end{theorem}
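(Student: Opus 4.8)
The plan is to reduce the angle condition to the norm inequality $\|\tilde Y\|^2 \ge \omega^2\|\hat Y\|^2$, then split $\|Y_\parallel(U_1,V_3)\|^2$ into a fixed tangent‑space part and a part that \Cref{alg:HOOI_proj} maximizes blockwise, and finally read off the factor $\omega$ from the truncated‑SVD estimate of the Lemma in \Cref{subsec:OrthogonalProjections}. First, by \Cref{prop:U2_V2_Z2_orth_proj} the tensor $\tilde Y = Y_\parallel(\tilde U_1,\tilde V_3)$ produced by the construction is feasible for \eqref{eq:proj_max_reform}, so $\langle Y,\tilde Y\rangle = \|\tilde Y\|^2$; hence, with $\hat Y \in \mathcal P_{T_X\mathbb R_{\le(k_1,k_2)}^{n_1\times n_2\times n_3}}Y$, the angle condition \eqref{eq:ApproximateProjectionAngleCondition} with $\omega$ as in \eqref{eq:OurOmega} amounts to $\|\tilde Y\|^2 \ge \omega^2\|\hat Y\|^2$. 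Expanding \eqref{eq:g_expanded} and inserting \eqref{eq:V2_U2_Z2_proj_orth}, one obtains $\|Y_\parallel(U_1,V_3)\|^2 = c + f(U_1,V_3)$, where $c$ is the squared norm of the tangent‑space part of \eqref{eq:g_expanded} (its first three summands) and is independent of $(U_1,V_3)$, and, by orthonormality of $X_1'$, $X_3''$, $U_1$, $V_3$,
\begin{equation*}
f(U_1,V_3) = \|P_{U_1} N^\mathrm{L}\|^2 + \|M^\mathrm{R} P_{V_3^\top}\|^2 + \|U_1^\top \cdot Y \cdot V_3^\top\|^2 \ge 0,
\end{equation*}
with $N := [(Y \cdot X_3''^\top)^\mathrm{L} P_{(X_2''^\mathrm{L})^\top}^\perp]^{n_1\times n_2\times r_2}$ and $M := [P_{X_2'^\mathrm{R}}^\perp (X_1'^\top \cdot Y)^\mathrm{R}]^{r_1\times n_2\times n_3}$ depending only on $X$ and $Y$. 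By \Cref{prop:U2_V2_Z2_orth_proj} together with \eqref{eq:proj_max_reform}, $\|\hat Y\|^2 = c + \max_{U_1,V_3} f(U_1,V_3)$; since $\omega \le 1$ (recall $k_1\le n_1$, $k_2\le n_3$), it then suffices to prove $f(\tilde U_1,\tilde V_3) \ge \omega^2 \max_{U_1,V_3} f(U_1,V_3)$.

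Next I would record the block structure of $f$. Since $\|U_1^\top\cdot Y\cdot V_3^\top\|^2 = \|P_{U_1}(Y\cdot V_3^\top)^\mathrm{L}\|^2 = \|(U_1^\top\cdot Y)^\mathrm{R}P_{V_3^\top}\|^2$, one can write
\begin{equation*}
f(U_1,V_3) = \big\|P_{U_1}[\,N^\mathrm{L} \mid (Y\cdot V_3^\top)^\mathrm{L}\,]\big\|^2 + \|M^\mathrm{R} P_{V_3^\top}\|^2 = \|P_{U_1} N^\mathrm{L}\|^2 + \left\|\begin{bmatrix}M^\mathrm{R}\\(U_1^\top\cdot Y)^\mathrm{R}\end{bmatrix} P_{V_3^\top}\right\|^2,
\end{equation*}
where $[\,\cdot\mid\cdot\,]$ denotes column concatenation. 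In the first form the second summand does not depend on $U_1$, so $f(\cdot,V_3)$ is maximized over $\{U_1\in\Stiefel(s_1,n_1):U_1^\top X_1'=0\}$ by the $s_1$ dominant left singular vectors of $P_{X_1'}^\perp[\,N^\mathrm{L}\mid(Y\cdot V_3^\top)^\mathrm{L}\,]$, a matrix of rank at most $n_1-r_1$; symmetrically, in the second form $f(U_1,\cdot)$ is maximized over $\{V_3 : V_3^\top\in\Stiefel(s_2,n_3),\, V_3X_3''^\top=0\}$ by the $s_2$ dominant right singular vectors of a matrix of rank at most $n_3-r_2$. Hence \Cref{alg:HOOI_proj} is block‑coordinate maximization of $f$ with truncated‑SVD updates, so $f$ is nondecreasing along its iterations and $f(\tilde U_1,\tilde V_3) \ge f(U_1^{(0)},V_3^{(0)})$ for the initial pair, whatever $\varepsilon$ and $i_{\mathrm{max}}$ are; it remains to handle the initialization (or the first completed sweep).

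Finally I would invoke the Lemma of \Cref{subsec:OrthogonalProjections}: the initialization builds $U_1^{(0)}$ and $V_3^{(0)}$ from dominant singular vectors of matrices whose $X_1'$‑ and $X_3''$‑orthogonal complements have rank at most $n_1-r_1$ and $n_3-r_2$, so the second inequalities in \eqref{eq:ineqs_svd_trunc1}--\eqref{eq:ineqs_svd_trunc2} (or, when $s_i=n_i-r_i$, the identity $P_U A=A$ for $U$ an orthonormal basis of the relevant subspace) yield that $U_1^{(0)}$ retains at least the fraction $\frac{s_1}{n_1-r_1}$ of the mode‑$1$ energy and $V_3^{(0)}$ at least $\frac{s_2}{n_3-r_2}$ of the mode‑$3$ energy. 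Combining these with the two upper estimates for $\max_{U_1,V_3}f$ obtained by bounding the coupling term $\|U_1^\top\cdot Y\cdot V_3^\top\|^2$ above by $\|U_1^\top\cdot Y\|^2=\|P_{U_1}Y^\mathrm{L}\|^2$ in one and by $\|Y\cdot V_3^\top\|^2=\|Y^\mathrm{R}P_{V_3^\top}\|^2$ in the other (thereby folding it into the mode‑$1$, resp.\ mode‑$3$, term), and retaining the more favourable of the two, gives $f(U_1^{(0)},V_3^{(0)}) \ge \max\{\tfrac{s_1}{n_1-r_1},\tfrac{s_2}{n_3-r_2}\}\max_{U_1,V_3}f(U_1,V_3)$, i.e.\ \eqref{eq:OurOmega}.

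The main obstacle is the coupling term $\|U_1^\top\cdot Y\cdot V_3^\top\|^2$ — the squared norm of the $Z_2$‑block of \eqref{eq:g_expanded} — which depends on both $U_1$ and $V_3$: on its own, an $s_1$‑dimensional mode‑$1$ subspace followed by an $s_2$‑dimensional mode‑$3$ subspace can be guaranteed to capture only the fraction $\frac{s_1 s_2}{(n_1-r_1)(n_3-r_2)}$ of it, strictly smaller than $\omega^2$. The way around this is not to control it in isolation but to dominate it by one of the two single‑mode energies — each already controlled through the Lemma — so that the corresponding truncated‑SVD guarantee absorbs it; the maximum in \eqref{eq:OurOmega} is precisely the benefit of being free to keep whichever single‑mode estimate is larger. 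The delicate point, and where the remaining computation lies, is to verify that the SVD‑based choices in \Cref{alg:HOOI_proj} — at the initialization, and, if necessary, exploiting the monotonicity established above — do furnish a pair for which this domination holds with exactly the stated factor.
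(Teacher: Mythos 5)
Your proposal is correct and follows essentially the same route as the paper's proof: reduce the angle condition to $\|\tilde Y\|\ge\omega\|\hat Y\|$ via feasibility in \eqref{eq:proj_max_reform}, split off the $(U_1,V_3)$-independent tangent-space part, interpret \Cref{alg:HOOI_proj} as block-coordinate maximization (giving monotonicity for all $\varepsilon$ and $i_{\max}$), and absorb the coupling term $\|U_1^\top\cdot Y\cdot V_3^\top\|^2$ into whichever single-mode block carries the larger ratio $s_i/(n_i-r_i)$ before applying the two inequalities of the truncated-SVD lemma. The only part left as "remaining computation" — chaining the ratio bound for the $V_3$-SVD with the exact optimality of the $U_1$-SVD over the relaxed matrix built from $P_{X_3''^\top}^\perp$ at the first sweep — is precisely the chain of four displayed inequalities in the paper, so the plan closes as stated.
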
 

\begin{proof}
Let $(s_1, s_2) := (k_1-r_1, k_2-r_2)$ and $\hat{Y} \in \mathcal{P}_{T_X \mathbb{R}_{\le (k_1, k_2)}^{n_1 \times n_2 \times n_3}}Y$. Thus, $s_1 + s_2 > 0$.
Because $W_1$, $W_2$, $W_3$, $U_2$, $V_2$, and $Z_2$ are as in \eqref{eq:V2_U2_Z2_proj_orth}, it holds that $\tilde{Y} = Y_\parallel(U_1,V_3)$, thus $\tilde{Y}$ is a feasible point of \eqref{eq:proj_max_reform}, and hence \eqref{eq:ApproximateProjectionAngleCondition} is equivalent to $\|\tilde{Y}\| \ge \omega \|\hat{Y}\|$.
To compare the norm of $\tilde{Y}$ with the norm of $\hat{Y}$, $\hat{U}_1$ and $\hat{V}_3$ are defined as the parameters of $\hat{Y}$. 
From \eqref{eq:V2_U2_Z2_proj_orth}, and because all terms are mutually orthogonal, we have that
\begin{equation*}
\begin{split}
\big\lVert \tilde{Y} \big\rVert^2 =&~\Big\| \mathcal{P}_{T_X \mathbb{R}_{(r_1, r_2)}^{n_1 \times n_2 \times n_3}} Y \Big\|^2 + \Big\lVert P_{U_1} \big( Y \cdot X_3''^\top \big)^\mathrm{L} P_{\left( X_2''^{\mathrm{L}} \right)^\top}^\perp \left( X_3'' \otimes I_{n_2} \right) \Big\rVert^2 \\
&+\left\lVert \big( ( P_{U_1} \cdot Y  )^\mathrm{R} +  \left( I_{n_2} \otimes X_1'  \right)  P_{X_2'^{\mathrm{R}}}^\perp ( X_1'^\top \cdot Y )^\mathrm{R} \big) P_{V_3^{\top}} \right\rVert^2.
\end{split}
\end{equation*}
Now, assume that $s_2/(n_3-r_2) > s_1/(n_1-r_1)$ and consider the first iteration of \Cref{alg:HOOI_proj}. Because in the second step $V_3$ is obtained by a truncated SVD of $\big( ( P_{U_1} \cdot Y  )^\mathrm{R} +  \left( I_{n_2} \otimes X_1'  \right)  P_{X_2'^{\mathrm{R}}}^\perp ( X_1'^\top \cdot Y )^\mathrm{R} \big) P_{X_3''^{\top}}^\perp$ and, by using \eqref{eq:ineqs_svd_trunc2},
\begin{equation*} 
\begin{split}
\big\lVert \tilde{Y} \big\rVert^2 \ge&~\Big\lVert \mathcal{P}_{T_X \mathbb{R}_{(r_1, r_2)}^{n_1 \times n_2 \times n_3}} Y \Big\rVert^2 + \Big\lVert P_{U_1} \big( Y \cdot X_3''^\top \big)^\mathrm{L} P_{\left( X_2''^{\mathrm{L}} \right)^\top}^\perp \left( X_3'' \otimes I_{n_2} \right) \Big\rVert^2 \\
&+ \frac{s_2}{n_3-r_2} \left\lVert \left(( P_{U_1} \cdot Y  )^\mathrm{R} + \left( I_{n_2} \otimes X_1'  \right)  P_{X_2'^{\mathrm{R}}}^\perp ( X_1'^\top \cdot Y )^\mathrm{R} \right) P_{X_3''^\top}^\perp \right\rVert^2.\\
\end{split}
\end{equation*} 
Furthermore, because in the first step $U_1$ is obtained from the truncated SVD of $ P_{X_1'}^\perp \big( ( Y \cdot P_{X_3''^{\top}}^\perp )^\mathrm{L} +  ( Y \cdot X_3''^\top )^\mathrm{L} P_{ X_2''^{\mathrm{L}^\top} }^\perp \left( X_3'' \otimes I_{n_2} \right) \big)$ and by using \eqref{eq:ineqs_svd_trunc1},
\begin{equation*} 
\begin{split}
\big\lVert \tilde{Y} \big\rVert^2 \ge&~\Big\lVert \mathcal{P}_{T_X \mathbb{R}_{(r_1, r_2)}^{n_1 \times n_2 \times n_3}} Y \Big\rVert^2 + \frac{s_2}{n_3-r_2} \Big\lVert P_{\hat{U}_1} \big( Y \cdot X_3''^\top \big)^\mathrm{L} P_{\left( X_2''^{\mathrm{L}} \right)^\top}^\perp \left( X_3'' \otimes I_{n_2} \right) \Big\rVert^2 \\
&+ \frac{s_2}{n_3-r_2} \left\lVert \left(( P_{\hat{U}_1} \cdot Y  )^\mathrm{R} + \left( I_{n_2} \otimes X_1'  \right)  P_{X_2'^{\mathrm{R}}}^\perp ( X_1'^\top \cdot Y )^\mathrm{R} \right) P_{X_3''^\top}^\perp \right\rVert^2,\\
\end{split}
\end{equation*} 
where we have used that a multiplication with $\frac{s_2}{n_3-r_2}$ can only decrease the norm. The same is true for a multiplication with $ P_{\hat{V}_3^\top}$ and thus 
\begin{equation*} 
\begin{split}
&\big\lVert \tilde{Y} \big\rVert^2 \ge~ 
\frac{s_2}{n_3-r_2} \Big( \big\lVert \mathcal{P}_{T_X \mathbb{R}_{(r_1, r_2)}^{n_1 \times n_2 \times n_3}} Y \big\rVert^2 +  \big\lVert P_{\hat{U}_1} \big( Y \cdot X_3''^\top \big)^\mathrm{L} P_{\left( X_2''^{\mathrm{L}} \right)^\top}^\perp \left( X_3'' \otimes I_{n_2} \right) \big\rVert^2  \\
&+ \left\lVert \left(( P_{\hat{U}_1} \cdot Y  )^\mathrm{R} + \left( I_{n_2} \otimes X_1'  \right)  P_{X_2'^{\mathrm{R}}}^\perp ( X_1'^\top \cdot Y )^\mathrm{R} \right) P_{\hat{V}_3^\top} \right\rVert^2 \Big) =\frac{s_2}{n_3-r_2} \big\lVert \hat{Y} \big\rVert^2.\\
\end{split}
\end{equation*} 
In \Cref{alg:HOOI_proj}, the norm of the approximate projection increases monotonously. Thus, this lower bound is satisfied for any $\varepsilon$ and $i_{\max}$. 
A similar derivation can be made if $s_2/(n_3-r_2) \le s_1/(n_1-r_1)$.
\qed
\end{proof}

This section ends with three remarks on \Cref{alg:HOOI_proj}. First, the instruction ``$[U, S, V] \gets \mathrm{SVD}_s(A)$'' means that $USV^\top$ is a truncated SVD of rank $s$ of $A$. Since those SVDs are not necessarily unique, \Cref{alg:HOOI_proj} can output several $(U_1, V_3)$ for a given input, and hence the approximate projection is set-valued.

Second, the most computationally expensive operation in \Cref{alg:HOOI_proj} is the truncated SVD. The first step of the first iteration requires to compute either $s_1$ singular vectors of a matrix of size $n_1 \times n_2 n_3$ or $s_2$ singular vectors of a matrix of size $n_1 n_2 \times n_3$. 
All subsequent steps are computationally less expensive since each of them merely requires to compute either $s_1$ singular vectors of $A := P_{X_1'}^\perp \Big[ \big( Y \cdot V_3^\top \big)^\mathrm{L}, \big( Y \cdot X_3''^\top \big)^\mathrm{L} P_{ X_2''^{\mathrm{L}^\top} }^\perp \Big] \in \mathbb{R}^{n_1 \times n_2 (r_2+s_2)}$ or $s_2$ singular vectors of $B := \Big[ \big( U_1^\top \cdot Y  \big)^\mathrm{R}; P_{X_2'^{\mathrm{R}}}^\perp \big( X_1'^\top \cdot Y \big)^\mathrm{R} \Big] P_{X_3''^{\top}}^\perp \in \mathbb{R}^{(r_1+s_1) n_2 \times n_3}$, and, in general, $s_1+r_1 \ll n_1$ and $s_2+r_2 \ll n_3$. This is because
\begin{equation*}
\begin{split}
&P_{X_1'}^\perp \Big( \big( Y \cdot P_{V_3^\top} \big)^\mathrm{L} +  \big( Y \cdot X_3''^\top \big)^\mathrm{L} P_{ X_2''^{\mathrm{L}^\top} }^\perp \left( X_3'' \otimes I_{n_2} \right) \Big) = A \begin{bmatrix}
V_3 \otimes I_{n_2} \\ 
X_3'' \otimes I_{n_2}
\end{bmatrix},
\end{split}
\end{equation*} 
and the rightmost matrix, being in $\Stiefel(n_2(s_2+r_2),n_2 n_3)$ by \Cref{lemma:TangentConeLowRankVariety}, does not change the left singular vectors (and values). The argument for $B$ is similar.
The {\Matlab} implementation of \Cref{alg:HOOI_proj} that is used to perform the numerical experiment in \Cref{sec:NumericalExperiment} computes a subset of singular vectors (and singular values) using the \mlin{svd} function with \mlin{`econ'} flag.

Third, studying the numerical stability of \Cref{alg:HOOI_proj} would require a detailed error analysis, which is out of the scope of the paper. Nevertheless, the modified orthogonality conditions improve the stability compared to the approximate projection described in \cite[\S 5.4.4]{kutschan2019} because \Cref{alg:HOOI_proj} uses only orthogonal matrices to project onto vector spaces (it uses no Moore--Penrose inverse).

\begin{algorithm}
\setstretch{1.2}
\caption{Iterative method to obtain $U_1$ and $V_3$ of $\tilde{\mathcal{P}}_{T_X \mathbb{R}_{\le (k_1, k_2)}^{n_1 \times n_2 \times n_3}}Y$}
\label{alg:HOOI_proj}
\begin{algorithmic} [0]
\REQUIRE
$Y \in \mathbb{R}^{n_1 \times n_2 \times n_3}$, $X = X_1' \cdot X_2' \cdot X_3 = X_1 \cdot X_2'' \cdot X_3'' \in \mathbb{R}_{(r_1, r_2)}^{n_1 \times n_2 \times n_3}$, $\varepsilon > 0$, $i_{\mathrm{max}}, s_1, s_2 \in \mathbb{N} \setminus \{0\}$
\STATE \textbf{Initialize:} $i\gets0$, $V_3 \gets P_{X_3''^\top}^\perp$, $U_1 \gets P_{X_1'}^\perp$, $\eta_1 \gets 0$, $\eta_{\mathrm{new}}\gets\infty$
\IF{$s_2/(n_3-r_2) > s_1/(n_1-r_1)$}
\WHILE {$i<i_{\mathrm{max}}$ \AND $ | \eta_{\mathrm{new}} - \eta_1 | \leq \varepsilon$}
\STATE $\eta_1 \gets \eta_{\mathrm{new}}$, $i \gets i+1$
\STATE$\left[U_1, \sim, \sim\right] \gets \mathrm{SVD}_{s_1} \Big( P_{X_1'}^\perp \Big( \big( Y \cdot P_{V_3^\top} \big)^\mathrm{L} +  \big( Y \cdot X_3''^\top \big)^\mathrm{L} P_{ X_2''^{\mathrm{L}^\top} }^\perp \left( X_3'' \otimes I_{n_2} \right) \Big) \Big)$
\STATE$\left[\sim, S, V_3^\top \right] \gets \mathrm{SVD}_{s_2} \Big( \Big( \big( P_{U_1} \cdot Y  \big)^\mathrm{R} +  \left( I_{n_2} \otimes X_1'  \right)  P_{X_2'^{\mathrm{R}}}^\perp \big( X_1'^\top \cdot Y \big)^\mathrm{R} \Big) P_{X_3''^{\top}}^\perp \Big)$
\STATE
$\eta_{\mathrm{new}} \gets \lVert S \rVert^2 +\lVert P_{U_1} \big( Y \cdot X_3''^\top \big)^\mathrm{L} P_{ X_2''^{\mathrm{L}^\top} }^\perp \left( X_3'' \otimes I_{n_2} \right) \rVert^2$
\ENDWHILE
\ELSE
\WHILE {$i<i_{\mathrm{max}}$ \AND $ | \eta_{\mathrm{new}} - \eta_1 | \leq \varepsilon$}
\STATE $\eta_1 \gets \eta_{\mathrm{new}}$, $i \gets i+1$
\STATE$\left[\sim, \sim, V_3^\top \right] \gets \mathrm{SVD}_{s_2} \Big( \Big( \big( P_{U_1} \cdot Y  \big)^\mathrm{R} +  \left( I_{n_2} \otimes X_1'  \right)  P_{X_2'^{\mathrm{R}}}^\perp \big( X_1'^\top \cdot Y \big)^\mathrm{R} \Big) P_{X_3''^{\top}}^\perp \Big)$
\STATE$\left[U_1, S, \sim\right] \gets \mathrm{SVD}_{s_1} \Big( P_{X_1'}^\perp \Big( \big( Y \cdot P_{V_3^\top} \big)^\mathrm{L} +  \big( Y \cdot X_3''^\top \big)^\mathrm{L} P_{ X_2''^{\mathrm{L}^\top} }^\perp \left( X_3'' \otimes I_{n_2} \right) \Big) \Big)$
\STATE
$\eta_{\mathrm{new}} \gets \lVert S \rVert^2 +\lVert \left( I_{n_2} \otimes X_1'  \right)  P_{X_2'^{\mathrm{R}}}^\perp \big( X_1'^\top \cdot Y \big)^\mathrm{R} P_{V_3^\top} \rVert^2$
\ENDWHILE
\ENDIF
\ENSURE
$U_1, V_3$.
\end{algorithmic}
\end{algorithm}

\section{A Numerical Experiment}
\label{sec:NumericalExperiment}
To compute gradient-related directions in the tangent cone, the input tensor for \Cref{alg:HOOI_proj} would be the gradient of the continuously differentiable function that is considered. Since, in general, such tensors are dense, we consider in this section randomly generated pairs of dense tensors $(X, Y)$ with $X \in \mathbb{R}_{\le (k_1, k_2)}^{n_1 \times n_2 \times n_3}$ and $Y \in \mathbb{R}^{n_1 \times n_2 \times n_3}$, and compare the values of $\Big\langle \frac{\tilde{Y}}{\|\tilde{Y}\|}, \frac{Y}{\|Y\|} \Big\rangle$ obtained by computing the approximate projection $\tilde{Y}$ of $Y$ onto $T_X \mathbb{R}_{\le (k_1, k_2)}^{n_1 \times n_2 \times n_3}$ using \Cref{thm:approx_proj_TC_TT}, the tensor diagrams from \cite[\S 5.4.4]{kutschan2019}, which we have implemented in \Matlab, and the point output by the built-in \Matlab {} function \mlin{fmincon} applied to \eqref{eq:min_proj}. The latter can be considered as a benchmark for the exact projection. Since $\|Y\| \ge \|\mathcal{P}_{T_X \mathbb{R}_{\le (k_1, k_2)}^{n_1 \times n_2 \times n_3}} Y\|$, \eqref{eq:ApproximateProjectionAngleCondition} is satisfied if $\Big\langle \frac{\tilde{Y}}{\|\tilde{Y}\|}, \frac{Y}{\|Y\|} \Big\rangle \ge \omega$.

For this experiment, we set $(k_1, k_2) := (3, 3)$ and generate fifty random pairs $(X, Y)$, where $X \in \mathbb{R}^{5 \times 5 \times 5}_{(2,2)}$ and $Y \in \mathbb{R}^{5 \times 5 \times 5}$, using the built-in \Matlab~ function \mlin{randn}. For such pairs, the $\omega$ from \eqref{eq:OurOmega} equals $\frac{1}{3}$. We use \Cref{alg:HOOI_proj} with $\varepsilon := 10^{-16}$, which implies that $i_{\max}$ is used as stopping criterion.
In the left subfigure of \Cref{fig:ni5_ri2_si1_box_plots_comparison_50exp_epsilon}, the box plots for this experiment are shown for two values of $i_{\max}$. As can be seen, for both values of $i_{\max}$, the values of $\Big\langle \frac{\tilde{Y}}{\|\tilde{Y}\|}, \frac{Y}{\|Y\|} \Big\rangle$ obtained by the proposed approximate projection are close to those obtained by \mlin{fmincon} and are larger than those obtained by the approximate projection from \cite[\S 5.4]{kutschan2019}. We observe that $\Big\langle \frac{\tilde{Y}}{\|\tilde{Y}\|}, \frac{Y}{\|Y\|} \Big\rangle$ is always larger than $\frac{1}{3}$, which suggests that \eqref{eq:OurOmega} is a pessimistic estimate.
The middle subfigure compares ten of the fifty pairs. For one of these pairs, the proposed method obtains a better result than \mlin{fmincon}. This is possible since the \mlin{fmincon} solver does not necessarily output a global solution because of the nonconvexity of \eqref{eq:min_proj}. An advantage of the proposed approximate projection is that it requires less computation time than the \mlin{fmincon} solver (a fraction of a second for the former and up to ten seconds for the latter).
In the rightmost subfigure, the evolution of $\eta_{\mathrm{new}} - \eta_1$ is shown for one of the fifty pairs.
This experiment was run on a laptop with a AMD Ryzen 7 PRO 3700U processor (4 cores, 8 threads) having 13.7 GiB of RAM under Kubuntu 20.04. The {\Matlab} version is R2020a. The code is publicly available.\footnote{URL: \url{https://github.com/golikier/ApproxProjTangentConeTTVariety}}

\begin{figure}
\centering
\includegraphics[width= \linewidth]{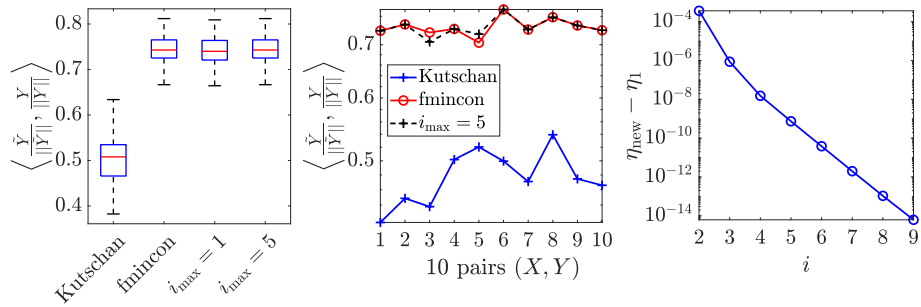}
\caption{A comparison of $\Big\langle \frac{\tilde{Y}}{\|\tilde{Y}\|}, \frac{Y}{\|Y\|} \Big\rangle$ for fifty randomly generated pairs $(X, Y)$, with $X \in \mathbb{R}^{5 \times 5 \times 5}_{(2,2)}$, $Y \in \mathbb{R}^{5 \times 5 \times 5}$, and $(k_1,k_2):=(3,3)$, for the approximate projection defined in \Cref{thm:approx_proj_TC_TT}, the one from \cite[\S 5.4]{kutschan2019}, and the one output by \mlin{fmincon}. On the rightmost figure, the evolution of $\eta_{\mathrm{new}} - \eta_1$ is shown for one of the fifty pairs.}
\label{fig:ni5_ri2_si1_box_plots_comparison_50exp_epsilon}
\end{figure}
%
%
%
\bibliographystyle{splncs04}
\bibliography{references}

\end{document}